\numberwithin{equation}{section}
\def\R{\mathbb{R}}
\def\Z{\mathbb{Z}}
\def\T{\mathbb{T}}
\def\Lam{\Lambda}
\def\1{\mathds{1}}
\def\eps{\varepsilon}
\renewcommand\le{\leqslant}
\renewcommand\ge{\geqslant}
\renewcommand\leq{\leqslant}
\renewcommand\hat{\widehat}
\newcommand{\ft}[1]{\widehat #1}
\newcommand\mes{\operatorname{mes}}
\newcommand{\supp}{\operatorname{supp}}
\theoremstyle{plain}
\newtheorem{theorem}{Theorem}[section]
\newtheorem{lemma}{Lemma}[section]
\newtheorem*{claim*} {Claim}
\newcommand{\thmref}[1]{Theorem~\ref{#1}}
\newcommand{\lemref}[1]{Lemma~\ref{#1}}
\theoremstyle{definition}
\newtheorem*{remark*}{Remark}
\newenvironment{enumerate-math}
{\begin{enumerate}
\addtolength{\itemsep}{5pt}
}
{\end{enumerate}}
\begin{document}

 \title{On non-periodic tilings of the real line by a function}

\author[M. Kolountzakis]{{Mihail N. Kolountzakis}}
\address{M.K.: Department of Mathematics and Applied Mathematics, University of Crete, Voutes Campus, GR-700 13, Heraklion, Crete, Greece}
\email{kolount@uoc.gr}

\author[N. Lev]{{Nir Lev}}
\address{N.L.: Department of Mathematics, Bar-Ilan University, Ramat-Gan 52900, Israel}
\email{levnir@math.biu.ac.il}

\thanks{M.K.\ has been partially supported by the ``Aristeia II'' action (Project
FOURIERDIG) of the operational program Education and Lifelong Learning
and is co-funded by the European Social Fund and Greek national resources.}

\thanks{N.L.\ is partially supported by the Israel Science Foundation grant No. 225/13.}

\begin{abstract}
It is known that a positive, compactly supported function $f \in L^1(\mathbb R)$ 
can tile by translations only if the translation set is a finite union of periodic sets.
We prove that this is not the case if $f$ is allowed to have unbounded support.
On the other hand we also show that if the  translation set has finite local complexity,  then it must be periodic,
even if the support of $f$ is unbounded.
\end{abstract}

\maketitle

\section{Introduction}

\subsection{} Let $f$ be a function in $L^1(\mathbb R)$ and $\Lambda$ be a discrete set in $\mathbb R.$
The function $f$ is said to tile $\R$ at level $w$ (a constant) with translation set $\Lambda$ if
\begin{equation}\label{1.1}
\sum_{\lambda\in\Lambda}f(x-\lambda)=w\quad\text{(a.e.)}
\end{equation}
 and the series \eqref{1.1} converges absolutely a.e. In this case we will say
that $f+\Lambda$ is a tiling of $\R$ at level $w$.

 It was proved in \cite{LaWa96} that if $f=\1_\Omega$ is the indicator function of a bounded set $\Omega \subset \R$, whose boundary has Lebesgue measure zero, and if $f+\Lambda$ is a tiling at level $1,$ then $\Lambda$ must be a periodic set, that is, $\Lambda+\tau=\Lambda$ for some $\tau>0$.

 This result was extended in \cite{KoLa96} (and proved earlier in \cite{LepHo91}) to tilings by a function $f\in L^1(\mathbb R)$ with compact support. Namely, it was proved that if such a function $f$ tiles at some level $w$ with translation set $\Lambda$ of bounded density (and $f$ is not identically zero), then $\Lambda$ must be a finite union of periodic sets.

 Recall that a set $\Lambda\subset\mathbb R$ is said to be of bounded density if
 \begin{equation}\label{1.2}
 \sup_{x\in\mathbb R} \#(\Lambda\cap[x,x+1))<\infty.
 \end{equation}
 It was shown in \cite[Lemma 2.1]{KoLa96} that any tiling by a non-negative function $f$ (not identically zero) is necessarily of bounded density.

 \subsection{}
The question whether the periodic structure of the tiling is still necessary even when the function $f$ is allowed to have unbounded support, has remained open.  In this paper  we answer this question in the negative:

 \begin{theorem}\label{thm1.1}
 There is a positive function $f\in L^1(\mathbb R)$ and a discrete set $\Lambda$ of bounded density such that $f+\Lambda$ is a tiling of $\R$
at level $1$, but $\Lambda$ is not a finite union of periodic sets.
 \end{theorem}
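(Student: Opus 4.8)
The plan is to work on the Fourier side. Writing $\mu=\sum_{\lambda\in\Lambda}\delta_\lambda$, the tiling relation \eqref{1.1} says $f*\mu=w$, which on the distributional level reads
\begin{equation*}
\widehat f(\xi)\,\widehat\mu(\xi)=w\,\delta_0 .
\end{equation*}
As $f\in L^1(\R)$, its transform $\widehat f$ is continuous, so this identity asks exactly that $\widehat f$ vanish on $\spec(\mu)\setminus\{0\}$, while the value $\widehat f(0)=\int f\neq 0$ fixes the level $w$. This separates the problem into (a) finding a suitable non-periodic $\Lambda$ of bounded density, and (b) realising the required vanishing by a genuinely positive $f\in L^1(\R)$.

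Problem (b) is the easy half, and it dictates the shape of $f$. I would take $f=|g|^2$ with $g$ band-limited: choosing $\widehat g\in C_c^\infty$ supported in a small interval about the origin makes $g$ a Schwartz function, so that $f=|g|^2$ is a positive function in $L^1(\R)$, not identically zero, with $\int f=\|g\|_2^2>0$ and with $\widehat f$ supported in a prescribed interval $(-\delta,\delta)$. Smoothness of $\widehat g$ guarantees the absolute convergence needed to pass from the distributional identity back to the pointwise statement \eqref{1.1}. For such an $f$ the requirement on $\Lambda$ becomes transparent: $\widehat\mu$ must carry no spectrum in the punctured neighbourhood $(-\delta,\delta)\setminus\{0\}$, that is, $\Lambda$ must possess a \emph{spectral gap at the origin}. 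Such an $f$ automatically has unbounded support, as it must by the compact-support theorem of \cite{KoLa96}; this is precisely the feature the construction exploits.

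Everything then rests on Problem (a): producing $\Lambda\subset\R$ of bounded density, with a spectral gap at the origin, that is \emph{not} a finite union of periodic sets. It is instructive that this cannot be done inside a single lattice, since a non-periodic $\Lambda\subseteq\Z$ has dense spectrum in $\T$, which would force $\widehat f\equiv0$; so incommensurable scales are unavoidable. The first attempt, a finite union of incommensurate lattices such as $\Z\cup\sqrt2\,\Z$, does have bounded density and a spectrum $\Z\cup\tfrac1{\sqrt2}\Z$ that is locally finite and avoids a neighbourhood of $0$, hence tiles with the $f$ above; but it is useless here, being itself a finite union of periodic sets, exactly the conclusion already permitted by \cite{KoLa96}. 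Transcending this is the crux, and the difficulty is a genuine tension: keeping the gap open wants the reciprocal scales bounded away from $0$, whereas bounded density forces the densities of the successive incommensurate features to be summable, pushing their reciprocal scales towards $0$ and threatening to fill the gap. I expect this to be the main obstacle.

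To overcome it I would build $\Lambda$ by an explicit multiscale scheme, as a limit of periodic configurations in which, at each of infinitely many incommensurable scales, one inserts a sparse decoration governed by a fixed non-periodic binary sequence. The point I would aim to arrange is that the detail added at the $j$-th scale contributes to $\widehat\mu$ near the origin only through a term that \emph{cancels}, rather than merely decays, so that the spectral gap is preserved exactly at every stage and hence in the limit. The non-periodicity of the driving sequence should then endow $\Lambda$ with infinitely many incommensurable almost-periods, preventing it from being a finite union of periodic sets, while the sparsity across scales keeps the density bounded. The two steps demanding the most care are verifying that the gap survives the limit, so that the single band-limited $f$ from Problem (b) continues to tile, and proving rigorously that the limit set is not a finite union of periodic sets.
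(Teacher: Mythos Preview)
Your reduction to Problems (a) and (b) is sound, and your solution to (b) via $f=|g|^2$ with $\widehat g\in C_c^\infty$ is essentially what the paper does (it simply takes any positive Schwartz $f$ with $\widehat f$ compactly supported in $(-a,a)$). The content lies entirely in (a), and there your proposal is not a proof but a plan, and the plan rests on a misdiagnosis.

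You correctly observe that $\Lambda\subset\Z$ cannot work, but you then infer that ``incommensurable scales are unavoidable.'' This is false. The paper's $\Lambda$ is a small perturbation $\{n+\alpha(n):n\in\Z\}$ of a \emph{single} lattice, with $\alpha(n)\to 0$ and $\alpha\not\equiv 0$. The spectral gap comes from Kargaev's theorem: one can choose the perturbations $\alpha(n)$ so that the step function $F=\sum_n F_n$ (with $F_n$ the signed indicator of the interval between $n$ and $n+\alpha(n)$) satisfies $\widehat F=0$ on $(-a,a)$. Since $F'=\delta_{\Z}-\delta_\Lambda$, Poisson summation gives $\widehat\delta_\Lambda=\delta_{\Z}-\widehat{F'}=\delta_0$ on $(-a,a)$, which is exactly the gap you need. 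Non-periodicity is then immediate: because $\alpha(n)\to 0$, any periodic subset of $\Lambda$ would have to lie in $\Z$, yet $\Lambda\not\subset\Z$.

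Your multiscale scheme, by contrast, remains at the level of intention (``I would aim to arrange,'' ``should then endow''). You correctly identify the tension between bounded density and a persistent gap, but you do not resolve it; the crucial cancellation you invoke is never specified, and the non-periodicity argument is only asserted. Even if some construction along these lines could be made to work, nothing in the proposal actually does the work. The missing idea is precisely Kargaev's: manufacture the spectral gap by \emph{perturbing} a lattice rather than by superposing incommensurate ones.
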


 Actually, the set $\Lambda$ in our example is an arbitrarily small perturbation of the set of integers $\mathbb Z,$ and $f$ is a function from the Schwartz class.

 The proof of \thmref{thm1.1} depends on a result due to Kargaev \cite{Kar82} who constructed a set $\Omega\subset \mathbb R$ of finite Lebesgue measure which has a spectral gap. The latter means that the Fourier transform of the indicator function $\1_\Omega$ vanishes on some interval.

 \subsection{}
 A discrete set $\Lambda\subset\mathbb R$ is said to have \emph{finite local complexity} if $\Lambda$ can be enumerated as a sequence $\{\lambda(n)\},$ $n\in\mathbb Z,$ such that $\lambda(n)<\lambda(n+1)$ and the successive differences $\lambda(n+1)-\lambda(n)$ take only finitely many different values.

Clearly, any periodic set is of finite local complexity, but the converse is not true.

The next result complements \thmref{thm1.1}:

\begin{theorem}\label{thm1.2}
Let $f\in L^1(\mathbb R)$ (possibly of unbounded support) and $\Lambda$ be a set of finite local complexity.
If $f$ is not identically zero and $f+\Lambda$ is a tiling of $\R$ at some level $w$, then $\Lambda$ must be a periodic set.
\end{theorem}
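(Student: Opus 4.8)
The plan is to combine the cheap structural consequences of finite local complexity with a Fourier-analytic study of the translation set, the combinatorial endgame being supplied by the Morse--Hedlund theorem.

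First I would record what finite local complexity gives for free. Enumerating $\Lambda=\{\lambda(n)\}_{n\in\Z}$ with gaps $g_n=\lambda(n+1)-\lambda(n)$ taking only the finitely many values $d_1,\dots,d_k$, the least gap $\min_i d_i>0$ bounds the spacing from below, so $\Lambda$ has bounded density in the sense of \eqref{1.2} (here I cannot invoke the non-negativity argument of \cite{KoLa96}, since $f$ is a general $L^1$ function, but FLC supplies bounded density directly). Thus $\mu:=\sum_{\lambda\in\Lambda}\delta_\lambda$ is a translation-bounded measure and \eqref{1.1} reads $f*\mu=w$; taking Fourier transforms gives the basic identity $\hat f\,\hat\mu=w\,\delta_0$, so that away from the origin $\hat\mu$ is supported on the zero set $Z:=\{\hat f=0\}$, which is closed and, since $f\not\equiv0$ forces $\hat f$ continuous and not identically zero, is a proper closed subset of $\R$. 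A second free consequence is that every difference $\lambda(m)-\lambda(n)$ is an integer combination of the $d_i$, so $\Lambda-\Lambda\subseteq G:=\Z d_1+\cdots+\Z d_k$, a finitely generated subgroup of $\R$. Finally I would reduce the goal, periodicity of $\Lambda$, to periodicity of the bi-infinite gap word $(g_n)$ over the finite alphabet $\{d_1,\dots,d_k\}$, and recall that by the Morse--Hedlund theorem this in turn follows once the factor complexity of $(g_n)$ is shown to be bounded.

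Next I would pass to the autocorrelation. Along a subsequence of radii (using compactness of the space of translation-bounded measures) I would form the autocorrelation measure $\gamma$ of $\mu$; it is positive definite and, by the previous paragraph, supported on $G$, so its diffraction $\hat\gamma\geq0$ is a translation-bounded positive measure. The tiling identity forces $\supp\hat\gamma\subseteq\{0\}\cup Z$: the mean density of $\Lambda$ produces the atom at the origin, while any diffuse part of $\hat\gamma$ must live on $Z$. Moreover, $\gamma$ being supported on $G$ confines the Bragg peaks of $\hat\gamma$ to the dual group of $G$ (a lattice when the $d_i$ are commensurable, a dense subgroup otherwise). The aim is to show that $\hat\gamma$ is \emph{pure point} and carries only \emph{finitely many} Bragg peaks at rational frequencies after rescaling, as this corresponds to finite spectrum, hence bounded complexity, of the gap word.

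The main obstacle is precisely this spectral step: upgrading the support constraint $\supp\hat\gamma\subseteq\{0\}\cup Z$ to pure-pointness and finiteness. Continuity of $\hat f$ is what excludes the aperiodic competitors. An absolutely continuous diffraction, as a Rudin--Shapiro-type gap word would produce, would force $\hat f$ to vanish on a set of positive Lebesgue measure, while a dense pure point diffraction, as a Sturmian gap word would produce, would force $\hat f$ to vanish on a dense set and hence identically; both contradict $f\not\equiv0$. Making this rigorous for a possibly singular-continuous component, and ruling out an infinite pure point part, is where the finitely generated group $G$ must be used decisively, intersecting its dual with $\{0\}\cup Z$ and exploiting that $Z$ can carry no atomic mass of $\hat\gamma$ on an accumulating set. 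Once $\hat\gamma$ is known to be pure point with finite rational support, I would convert this into a uniform bound on the number of distinct length-$N$ factors of $(g_n)$ and invoke Morse--Hedlund to conclude that $(g_n)$, and therefore $\Lambda$, is periodic. I expect the exclusion of a singular-continuous component, together with this last conversion from finite spectrum to bounded word complexity, to be the part requiring the most care.
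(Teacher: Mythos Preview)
Your first paragraph is essentially correct and matches the paper: finite local complexity gives bounded density, and the tiling forces $\supp(\hat\delta_\Lambda)\subset\{\hat f=0\}\cup\{0\}$; this is exactly the paper's Theorem~4.1. One caution: you cannot simply ``take Fourier transforms'' of $f*\mu=w$ and write $\hat f\,\hat\mu=w\delta_0$, because $\hat f$ is merely continuous and the product with a tempered distribution is not a priori defined. The paper establishes the support inclusion directly, using a local inverse of $\hat f$ supplied by Wiener's Tauberian theorem; you should do likewise.

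From that point you already have what you need but do not use it: since $\{\hat f=0\}\cup\{0\}$ is a proper closed set, $\hat\delta_\Lambda$ vanishes on some open interval. The paper then invokes a single lemma (its Theorem~5.1, from \cite{IoKo13}): a finite-local-complexity set whose Dirac comb has a spectral gap is periodic. That is the whole proof.

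Your detour through the autocorrelation and Morse--Hedlund, by contrast, has genuine gaps. The most concrete error is the absolutely continuous case: ``$\hat f$ vanishes on a set of positive Lebesgue measure'' does \emph{not} contradict $f\not\equiv0$; a continuous $\hat f$ can vanish on a fat Cantor set, or indeed on a whole interval (take $f$ the inverse transform of a smooth bump). So a Rudin--Shapiro-type gap word is not excluded by your argument. Second, you never actually justify $\supp\hat\gamma\subset\{0\}\cup Z$; the relation between $\supp\hat\delta_\Lambda$ and the diffraction support is not automatic. Third, you yourself flag the singular-continuous component and the passage from ``finite pure-point spectrum'' to ``bounded factor complexity'' as unresolved, and they are: the former is at least as hard as the original problem, and the latter is essentially asking for pure discrete dynamical spectrum to imply bounded complexity, which is false in general without further structure. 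The upshot is that the second half of your plan is both harder and currently incorrect; the short route is to use the spectral gap for $\delta_\Lambda$ itself and the FLC spectral-gap lemma.
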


\subsection{} One may also consider tilings of the set of integers $\mathbb Z$ by a set, or, more generally, by a function. If $f$ is a function defined on $\mathbb Z$ and $\Lambda$ is a subset of $\mathbb Z$, then we say that $f+\Lambda$ is a tiling of $\Z$ at level $w$ if
\begin{equation}\label{6.1}
\sum_{k\in\Lambda}f(n-k)=w,\quad n\in\mathbb Z,
\end{equation}
and the series \eqref{6.1} converges absolutely for each $n.$

It is known, see e.g.\ \cite{New77}, that any tiling of the integers (at level 1) by translates of a finite set $\Omega \subset \Z$ must be periodic.
Here we extend this result to tilings by functions:

\begin{theorem}\label{thm6.1}
Let $f\in\ell^1(\mathbb Z)$ (possibly of unbounded support) and $\Lambda$ be a subset of $\mathbb Z$.
If $f$ is not identically zero and $f+\Lambda$ is a tiling of $\Z$ at some level $w$, then $\Lambda$ is periodic.
\end{theorem}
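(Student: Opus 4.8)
The plan is to recast the tiling identity \eqref{6.1} as a convolution equation and pass to the Fourier side on the circle $\T = \R/\Z$. Writing $g = \1_\Lam \in \ell^\infty(\Z)$, the condition says $(f * g)(n) = w$ for every $n$, i.e. $f * g \equiv w$. Since $f \in \ell^1(\Z)$, its transform $\hat f(\xi) = \sum_n f(n) e^{-2\pi i n \xi}$ lies in the Wiener algebra $A(\T)$ of continuous functions, while $g$, being bounded, has a Fourier transform $\mu = \hat g$ that is a pseudomeasure on $\T$. Transforming the convolution equation gives
\[
\hat f \cdot \mu = w\,\delta_0
\]
in the sense of distributions on $\T$, where $\delta_0$ is the unit mass at the origin (the transform of the constant $w$).

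From this identity I would first locate the spectrum. Let $Z = \{\xi \in \T : \hat f(\xi) = 0\}$, a closed set. On the open set $\T \setminus (\{0\} \cup Z)$ the right-hand side vanishes and $\hat f$ is continuous and non-zero, so $\mu = 0$ there; hence $\supp \mu \subseteq \{0\} \cup Z$. Next, since $\Lam \subseteq \Z$ has at most one point per unit interval it has a well-defined density, and averaging \eqref{6.1} over $\{-N,\dots,N\}$ (using $f \in \ell^1$ to control the boundary terms) shows this density equals $d = w/\hat f(0)$; in particular, in the essential case $w \neq 0$ we get $\hat f(0) \neq 0$, so $0 \notin Z$, and localizing $\hat f\mu = w\delta_0$ near the origin shows that $0$ is an \emph{isolated} point of $\supp\mu$ carrying the atom $d\,\delta_0$. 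The degenerate case $w = 0$ I would treat separately, noting that it forces density $0$ and hence, once periodicity is known, an empty $\Lam$.

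The heart of the matter is to upgrade this to the statement that $\mu$ is a \emph{discrete measure supported on finitely many rational points}. Here the linear equation $\hat f\mu = w\delta_0$ is by itself insufficient: whenever $Z$ is large (for $f$ of unbounded support, $\hat f$ may vanish on a Cantor set or even on a set of positive measure), its bounded solutions form a huge space, so the decisive extra input must be the nonlinear constraint that $g$ is $\{0,1\}$-valued, i.e. the idempotent relation $g^2 = g$. Writing $h = g - d$, so that $f * h \equiv 0$ and $h$ takes only the two values $-d$ and $1-d$, this becomes $h^2 = (1-2d)h + d(1-d)$. I would exploit this through the autocorrelation of $g$: via a Banach-limit (or subsequence) argument its diffraction $\sigma$ is a \emph{positive finite} measure with $|\hat f|^2\sigma = |w|^2\delta_0$, hence again supported on $\{0\}\cup Z$, with $\sigma(\{0\}) = d^2$ and total mass $d$. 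The quadratic relation, which on the Fourier side ties the atomic part of the spectrum to the full $L^2$-energy of $h$, should force $\sigma$ — and with it $\mu$ — to be pure point, and hence $g$ to be a Fourier–Stieltjes sequence. Cohen's idempotent theorem on $\Z$ then identifies the $\{0,1\}$-valued sequence $g$ as the indicator of a set in the coset ring of $\Z$, i.e. a finite Boolean combination of arithmetic progressions; in particular the atoms of $\mu$ are rational and $\Lam$ is periodic up to a finite set. I expect the genuine obstacle to be precisely this middle step — showing that $\mu$ is an honest measure with discrete, rational support — since this is exactly where the combinatorial $\{0,1\}$ structure must defeat the analytic freedom created by a large zero set $Z$.

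Finally I would remove the remaining finite ambiguity. If $\Lam$ differs from a $\tau$-periodic set $P$ by a finite set, then $c = \1_\Lam - \1_P$ has finite support and $f * \1_P = w - f*c$. The left side is $\tau$-periodic, while $f*c \to 0$ at infinity (as $f(n)\to 0$), so a periodic function tending to $w$ is constant: $f*\1_P \equiv w$ and $f*c \equiv 0$. Transforming gives $\hat f \cdot \hat c = 0$ with $\hat c$ a trigonometric polynomial, which — unless $c \equiv 0$ — would force $\hat f \equiv 0$ and hence $f = 0$, contrary to hypothesis. Therefore $c \equiv 0$, $\Lam = P$, and $\Lam$ is genuinely periodic.
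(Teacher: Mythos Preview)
Your opening is exactly the paper's: recast the tiling as $\hat f \cdot \hat{\1}_\Lambda = w\,\delta_0$ with $\hat f \in A(\T)$ and $\hat{\1}_\Lambda$ a pseudo-measure, and conclude that $\supp(\hat{\1}_\Lambda) \subset \{\hat f = 0\} \cup \{0\}$. (The paper makes the division by $\hat f$ rigorous via the Wiener--Kahane--Salem statement that $\varphi S = 0$ with $\varphi \in A(\T)$ forces $\varphi = 0$ on $\supp S$; you should invoke Wiener's lemma or this fact rather than just say ``$\hat f$ is continuous and non-zero, so $\mu = 0$ there''.)

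The genuine gap is the one you yourself flag: you try to push from the support inclusion to ``$\mu$ is a discrete measure on rational points'' via diffraction and Cohen's idempotent theorem, and you do not close this step. The paper bypasses it entirely. Since $f \not\equiv 0$, the closed set $\{\hat f = 0\} \cup \{0\}$ is not all of $\T$, so $\hat{\1}_\Lambda$ vanishes on some open arc --- that is, $\1_\Lambda$ has a \emph{spectral gap}. At this point the paper invokes a theorem of Helson: any function on $\Z$ taking only finitely many values whose Fourier transform vanishes on an open interval of $\T$ is periodic. Applied to the $\{0,1\}$-valued sequence $\1_\Lambda$, this gives periodicity of $\Lambda$ immediately. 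No autocorrelation measure, no Cohen idempotent theorem, no ``periodic up to a finite set'' cleanup is needed. In effect, Helson's theorem already packages the step where the finitely-valued (nonlinear) constraint defeats the analytic freedom of a large zero set $Z$ --- precisely the obstacle you identified --- so you should replace your entire middle section and final cleanup by a single citation of it.
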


Actually this result can be deduced from \thmref{thm1.2}, but we will provide an alternative, independent proof of it.

\subsection{}
The rest of the paper is organized as follows.
In Section \ref{sec:kar} we give a self-contained presentation of a weaker version of
 Kargaev's theorem on sets with a spectral gap. In Section \ref{sec:nonp} we use this result to
prove the existence of a  non-periodic tiling of $\mathbb R$.

In Section \ref{sec:fourier} we give a Fourier-analytic condition necessary for tiling. In earlier work \cite{Ko00a,Ko00b,KoLa96} this condition was obtained only under additional assumptions on the tiling function $f$. In Section \ref{sec:flc} we use this condition to show that tilings of $\mathbb{R}$ by translation sets of finite local complexity, as well as general tilings of $\mathbb{Z}$, must be periodic.

In the last Section \ref{sec:open} we mention some open problems.


\section{Kargaev's construction}\label{sec:kar}

\subsection{}
Kargaev constructed in \cite{Kar82}  a (necessarily unbounded) set $\Omega\subset \mathbb R$ of finite Lebesgue measure 
which has a spectral gap, namely, such that the Fourier transform of its indicator function  vanishes on some interval.
Other presentations may be found in \cite[pp. 376--392]{HaJo94} and  \cite{KaVo92}. 

The paper \cite{Kar82} also contains a simpler proof of the following weaker result: there is a
function $F$ on the real line, which is $\{-1,0,1\}$-valued, and has a spectral gap. In this version, however,
the function $F$ is not in $L^1(\R)$, and its Fourier transform is understood in the sense of distributions.

In the above mentioned presentations, the construction relies on the implicit function theorem in infinite dimensional Banach space.

In this section we give a self-contained presentation of the weaker version of Kargaev's construction, in a
form that will be sufficient for our application. In the proof we avoid referring to the  infinite dimensional  implicit function 
theorem, and merely use the Banach contractive mapping theorem.

\subsection{}
 Let $\{\alpha(n)\},$ $n\in\mathbb Z,$ be a bounded sequence of real numbers. To such a sequence
 we associate a function $F$ on the real line, defined by
 \begin{equation}\label{2.1}
F(x)=\sum_{n\in\mathbb Z} F_n(x),\quad x\in\mathbb R,
 \end{equation}
 where $F_n=\1_{[n,n+\alpha(n)]}$ if $\alpha(n)\ge0$ and $F_n=-\1_{[n+\alpha(n),n]}$ if $\alpha(n)<0.$

 The assumption that the sequence $\{\alpha(n)\}$ is bounded implies that \eqref{2.1} represents a bounded function on $\mathbb R.$ In particular, $F$ is a tempered distribution.

In this paper, the Fourier transform is normalized as follows,
 $$\hat\varphi(t)=\int_{\mathbb R}\varphi(x)e^{-2\pi itx}dx,$$
 and this definition is extended to tempered distributions on $\R$ in the usual way.

 \begin{theorem}[Kargaev \cite{Kar82}]\label{thm2.1}
 Let $0<a<\frac{1}{2}$ and $\varepsilon>0$ be given. Then there is a real sequence $\{\alpha(n)\},$ $n\in\mathbb Z,$ satisfying
 \begin{equation}\label{2.2}
 0<\sup_{n\in\mathbb Z}|\alpha(n)|<\varepsilon,\qquad\lim_{n\to\pm\infty}\alpha(n)=0,
 \end{equation}
 and such that the function $F$ defined by \eqref{2.1} satisfies $\hat F=0$ in $(-a,a).$
 \end{theorem}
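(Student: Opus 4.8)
The plan is to read the vanishing of $\hat F$ on $(-a,a)$ as a nonlinear perturbation of a linear interpolation problem on the circle, and to solve it by the Banach contraction principle in $\ell^1(\Z)$. First I compute the transform of each building block. Since $\hat F_n(t)=e^{-2\pi itn}\,\frac{1-e^{-2\pi it\alpha(n)}}{2\pi it}$ for both signs of $\alpha(n)$ (this is exactly what the sign convention in \eqref{2.1} is designed to produce), writing $\psi(u)=\frac{1-e^{-2\pi iu}}{2\pi iu}$, an entire function with $\psi(0)=1$, I obtain, for $\alpha\in\ell^1(\Z)$ so that $F\in L^1(\R)$,
\begin{equation*}
\hat F(t)=\sum_{n}\alpha(n)\,\psi(t\alpha(n))\,e^{-2\pi itn}=L(\alpha)(t)+N(\alpha)(t),
\end{equation*}
where $L(\alpha)(t)=\sum_n\alpha(n)e^{-2\pi itn}$ is the linear Fourier series with coefficients $\alpha$, a $1$-periodic continuous function, and $N(\alpha)(t)=\sum_n\alpha(n)\,[\psi(t\alpha(n))-1]\,e^{-2\pi itn}$ collects the remainder. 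Since $\psi(u)-1=O(u)$, each summand of $N(\alpha)$ is $O(\alpha(n)^2)$, so $N$ is (at least) quadratic. The goal is then a real $\alpha\in\ell^1(\Z)$, with $\alpha\neq0$ and $\sup_n|\alpha(n)|<\eps$, solving $L(\alpha)=-N(\alpha)$ on $(-a,a)$.

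Second, I fix the zeroth-order solution. Ignoring $N$, I need a nonzero, smooth, $1$-periodic $h_0$ vanishing on the arc $(-a,a)\subset\T=\R/\Z$ and with real Fourier coefficients; taking $h_0$ real, supported in $[a,1-a]$ (possible since $a<\tfrac12<1-a$) and symmetric about $t=\tfrac12$ gives such an $h_0$, and I set $\alpha_0=\hat{h_0}$, a rapidly decreasing real sequence with $L(\alpha_0)=h_0$. Third, I build a right inverse and a fixed point. Fix a smooth even real cutoff $\chi$ on $\T$ with $\chi\equiv1$ on $[-a,a]$ and $\supp\chi\subset(-a',a')$ for some $a<a'<\tfrac12$, and set $S(g)=\hat{(\chi g)}$; then $L(S(g))=\chi g=g$ on $[-a,a]$ and $S$ preserves reality. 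Writing $\alpha=\delta\alpha_0+\beta$ with a small scaling parameter $\delta>0$, the equation $L(\alpha)=-N(\alpha)$ on $[-a,a]$ becomes $\beta=-S\big(N(\delta\alpha_0+\beta)\big)=:\mathcal T(\beta)$, since $L(\delta\alpha_0)=\delta h_0$ vanishes there. I will show $\mathcal T$ maps a ball $\{\|\beta\|_{\ell^1}\le r\}$ into itself and contracts it for $r\asymp\delta^2$ and $\delta$ small; its fixed point $\beta$ yields $\alpha=\delta\alpha_0+\beta$ with $\hat F=0$ on $[-a,a]\supset(-a,a)$, with $\alpha\neq0$ (as $\|\beta\|\ll\delta\|\alpha_0\|$), $\sup_n|\alpha(n)|<\eps$, and $\alpha(n)\to0$ (as $\alpha\in\ell^1$), which is precisely \eqref{2.2}.

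The main obstacle is the estimate for $\mathcal T$, where a naive scheme loses derivatives: bounding $N(\alpha)$ in $C^q$ costs $q$ derivatives, while forcing $S(N(\alpha))\in\ell^1$ needs more than $q$ derivatives of $\chi N(\alpha)$, so no single norm closes — this is exactly the difficulty that the infinite-dimensional implicit function theorem is usually invoked to handle. I avoid it by estimating the composite map $\alpha\mapsto S(N(\alpha))=\hat{(\chi N(\alpha))}$ in one step. Expanding,
\begin{equation*}
\hat{(\chi N(\alpha))}(m)=\sum_n \alpha(n)\,\hat{\phi_n}(m-n),\qquad \phi_n(t)=\chi(t)\,[\psi(t\alpha(n))-1],
\end{equation*}
where each $\phi_n$ is smooth, supported in $(-a',a')$, with $\|\phi_n\|_{C^M}\lesssim_M|\alpha(n)|$ uniformly for $|\alpha(n)|\le1$; hence $|\hat{\phi_n}(k)|\lesssim_M|\alpha(n)|(1+|k|)^{-M}$ by repeated integration by parts (no boundary terms, as $\supp\chi$ stays away from $\pm\tfrac12$). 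Taking $M\ge2$ and summing gives $\sum_m|\hat{(\chi N(\alpha))}(m)|\lesssim\sum_{m,n}\frac{|\alpha(n)|^2}{(1+|m-n|)^M}\lesssim\|\alpha\|_{\ell^1}^2$, so $S\circ N$ is bounded and quadratic on $\ell^1$ with no loss of derivatives; the same device gives $\|S(N(\alpha))-S(N(\alpha'))\|_{\ell^1}\lesssim(\|\alpha\|+\|\alpha'\|)\|\alpha-\alpha'\|$, whence the contraction. The key mechanism is that multiplying by the cutoff $\chi$ and passing to Fourier coefficients turns the $t$-smoothness of $\psi$ into rapid off-diagonal decay of the matrix $\hat{\phi_n}(m-n)$, which is summable against $\ell^1$ — this is what lets the contractive mapping theorem replace the implicit function theorem.
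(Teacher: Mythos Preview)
Your argument is correct, but it follows a genuinely different route from the paper's. The paper works on the \emph{function} side: it takes the Banach space $X=C(I)$ of continuous functions on $I=[-\tfrac12,\tfrac12]$ with $f(-t)=\overline{f(t)}$, defines the nonlinear remainder $R:X\to X$ directly via
\[
(Rf)(t)=\sum_n e^{2\pi int}\,\frac{e^{2\pi i\hat f(n)t}-1-2\pi i\hat f(n)t}{2\pi it},
\]
and uses only the elementary Taylor bounds $|e^{i\theta}-1|\le|\theta|$, $|e^{i\theta}-1-i\theta|\le\tfrac12\theta^2$ together with Parseval to show $\|Rf\|_\infty\le\tfrac\pi2\|f\|_\infty^2$ and the corresponding Lipschitz estimate; then for a small $g\in X$ vanishing on $(-a,a)$ the fixed point of $f\mapsto g-Rf$ yields $\alpha(n)=\hat f(n)$. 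No cutoff, no derivative counting, and no right inverse are needed; the outcome is $\alpha\in\ell^2$. Your scheme works on the \emph{sequence} side in $\ell^1(\Z)$, introduces the smooth cutoff $\chi$ and the right inverse $S$, and controls $S\circ N$ via the rapid off-diagonal decay of $\hat{\phi_n}$. This is more elaborate but buys a strictly stronger conclusion, $\alpha\in\ell^1$ and hence $F\in L^1(\R)$, which the paper's proof does not give. Note also that the ``loss of derivatives'' difficulty you describe is an artifact of your choice to work in $\ell^1$; in the paper's $C(I)$ setup the contraction closes immediately from the second-order Taylor remainder, so the implicit function theorem is avoided for a more elementary reason than the mechanism you exploit.
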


 In fact, the proof below gives a sequence $\{\alpha(n)\}$ such that $\sum\alpha(n)^2<\infty.$

 The proof  is divided into a series of lemmas.

\subsection{}
We need the following two simple inequalities.

 \begin{lemma}\label{lem2.1}
For any $\theta\in\mathbb R$ we have
\begin{enumerate-math}
\item $|e^{i\theta}-1|\le|\theta|;$
\item $|e^{i\theta}-1-i\theta|\le\frac{1}{2}|\theta|^2.$
\end{enumerate-math}
\end{lemma}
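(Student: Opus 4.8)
The plan is to prove both inequalities by the same elementary device: writing the left-hand side as an integral and estimating the integrand pointwise. Before doing so, I would observe that conjugation makes both quantities even functions of $\theta$ — indeed $\overline{e^{i\theta}-1}=e^{-i\theta}-1$ and $\overline{e^{i\theta}-1-i\theta}=e^{-i\theta}-1+i\theta$, so replacing $\theta$ by $-\theta$ only conjugates each expression and leaves its modulus unchanged. Since the right-hand sides depend only on $|\theta|$, I may assume throughout that $\theta\ge 0$.

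For part (i), I would start from the fundamental theorem of calculus in the form
$$e^{i\theta}-1=i\int_0^\theta e^{is}\,ds.$$
Passing to moduli and using $|ie^{is}|=1$ gives
$$|e^{i\theta}-1|\le\int_0^\theta |ie^{is}|\,ds=\int_0^\theta 1\,ds=\theta=|\theta|,$$
as desired.

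For part (ii), I would subtract the linear term inside the same integral. Writing $i\theta=i\int_0^\theta 1\,ds$, we obtain
$$e^{i\theta}-1-i\theta=i\int_0^\theta\bigl(e^{is}-1\bigr)\,ds.$$
Taking moduli and applying the bound from part (i) to the integrand $e^{is}-1$ yields
$$|e^{i\theta}-1-i\theta|\le\int_0^\theta |e^{is}-1|\,ds\le\int_0^\theta s\,ds=\tfrac{1}{2}\theta^2=\tfrac{1}{2}|\theta|^2.$$

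There is essentially no obstacle in this argument: these are standard estimates, and the only point requiring the slightest care is the case $\theta<0$, handled cleanly by the evenness noted at the outset (rather than by fussing with the orientation of integration). One could instead prove (i) geometrically, noting that the chord $2|\sin(\theta/2)|$ is at most the arc $|\theta|$, and (ii) via Taylor's formula with integral remainder; but the cascaded integral argument above has the advantage that (ii) follows directly from (i), which parallels the way the first- and second-order estimates for the exponential will subsequently be used to control the behaviour of the sums defining $\hat F$.
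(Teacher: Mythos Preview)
Your proof is correct and follows essentially the same idea as the paper: write the quantity as an integral and bound the integrand pointwise. The only cosmetic differences are that the paper parametrizes both integrals over $[0,1]$ (so no separate reduction to $\theta\ge 0$ is needed), and for (ii) it invokes the Taylor integral remainder $e^{i\theta}-1-i\theta=(i\theta)^2\int_0^1 e^{i\theta t}(1-t)\,dt$ directly rather than cascading from (i).
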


\begin{proof}
This follows from the identities
\[
e^{i\theta}-1 = i\theta\int_0^1 e^{i\theta t}dt, \qquad
e^{i\theta}-1-i\theta=(i\theta)^2\int_0^1 e^{i\theta t}(1-t)dt. \qedhere
\]
\end{proof}

\subsection{}
Denote $I:=\left[-\frac{1}{2},\frac{1}{2}\right].$ Let $X$  be the space of all continuous functions $f:I\to \mathbb C$ satisfying $f(-t)=\overline{f(t)},$ $t\in I.$ We regard $X$ as a real Banach space 
endowed with the  norm $\|f\|_\infty=\sup|f(t)|,$ $t\in I.$

Notice that the Fourier coefficients
$$\hat f(n)=\int_I f(t)e^{-2\pi int}dt,\quad n\in\mathbb Z,$$
of an element $f\in X$ satisfy
$$\hat f(n)\in\mathbb R\quad (n\in \mathbb Z),\qquad \sum_{n\in\mathbb Z}\hat f(n)^2\le \|f\|_\infty^2.$$

\subsection{}
Define a (non-linear) map $R:X\to X$ by

\begin{equation}\label{2.3}
(Rf)(t)=\sum_{n\in\mathbb Z} e^{2\pi int}\cdot \frac{e^{2\pi i\hat f(n)t}-1-2\pi i\hat f(n)t}{2\pi it} \, .
\end{equation}
Observe that the series converges uniformly on $I.$ Indeed, by \lemref{lem2.1}, the $n$'th member of the sum is bounded by $\frac{\pi}{2}\hat f(n)^2$ on $I,$ which implies the uniform convergence. As each member of the sum belongs to $X,$ the same is true for the entire sum $Rf.$

We also conclude that
\begin{equation}\label{2.4}
\|Rf\|_\infty\le\frac{\pi}{2}\sum_{n\in\mathbb Z}\hat f(n)^2\le \frac{\pi}{2}\|f\|_\infty^2
\end{equation}
for any $f\in X.$

\begin{lemma}\label{lem2.2}
For any $f,g\in X$ we have
$$\|Rf-Rg\|_\infty\le\frac{\pi}{2}\|f-g\|_\infty^2+\pi\|f\|_\infty \|f-g\|_\infty.$$
\end{lemma}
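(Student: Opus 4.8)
The plan is to bound $|(Rf)(t)-(Rg)(t)|$ pointwise on $I$ by a convergent numerical series and then pass to the supremum. Writing the difference of the two series term by term, the constants $-1$ cancel, and since $|e^{2\pi int}|=1$, the $n$-th summand of $Rf-Rg$ has modulus equal to $\frac{1}{2\pi|t|}$ times
\[
\left| e^{2\pi i\hat f(n)t}-e^{2\pi i\hat g(n)t}-2\pi i(\hat f(n)-\hat g(n))t\right|.
\]
Thus the whole problem reduces to estimating this quantity for each fixed $n$.

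The key step is an algebraic splitting designed to produce exactly the two terms appearing on the right-hand side of the asserted inequality. Setting $\phi=2\pi\hat f(n)t$ and $\theta=2\pi(\hat f(n)-\hat g(n))t$, so that $2\pi\hat g(n)t=\phi-\theta$, I would rewrite the bracketed expression as
\[
e^{i\phi}(1-e^{-i\theta})-i\theta=e^{i\phi}\bigl(1-e^{-i\theta}-i\theta\bigr)+i\theta\bigl(e^{i\phi}-1\bigr).
\]
Now \lemref{lem2.1}(ii), applied with $-\theta$ in place of $\theta$, bounds the first piece by $\tfrac12\theta^2$, while \lemref{lem2.1}(i) bounds the second by $|\theta|\,|\phi|$. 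This is precisely the place where one must be careful: the exponential that is factored out must be $e^{2\pi i\hat f(n)t}$ rather than $e^{2\pi i\hat g(n)t}$, since it is $|\phi|$, and hence $|\hat f(n)|$, that should appear in the cross term, matching the stated bound involving $\|f\|_\infty$ and not $\|g\|_\infty$.

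Substituting $|\theta|=2\pi|\hat f(n)-\hat g(n)|\,|t|$ and $|\phi|=2\pi|\hat f(n)|\,|t|$, dividing by $2\pi|t|$, and using $|t|\le\tfrac12$ on $I$, I obtain the pointwise estimate
\[
|(Rf)(t)-(Rg)(t)|\le\sum_{n\in\mathbb Z}\left(\frac{\pi}{2}(\hat f(n)-\hat g(n))^2+\pi\,|\hat f(n)-\hat g(n)|\,|\hat f(n)|\right).
\]
Finally I would sum over $n$: by linearity $\hat f(n)-\hat g(n)=\widehat{(f-g)}(n)$, so the first sum is at most $\frac{\pi}{2}\|f-g\|_\infty^2$ by the Parseval-type inequality recorded in the definition of $X$, and the second sum is at most $\pi\|f\|_\infty\|f-g\|_\infty$ by the Cauchy--Schwarz inequality together with the same two-norm control of the Fourier coefficients. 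Taking the supremum over $t\in I$ then yields the lemma. The only genuine difficulty is the algebraic rearrangement above; the remaining steps are direct applications of \lemref{lem2.1} and of the coefficient estimate $\sum_n\hat h(n)^2\le\|h\|_\infty^2$.
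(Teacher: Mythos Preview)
Your proof is correct and follows essentially the same route as the paper's own argument: the paper also factors out $e^{2\pi i\hat f(n)t}$ and splits the numerator into a quadratic remainder term handled by \lemref{lem2.1}(ii) and a cross term $(e^{2\pi i\hat f(n)t}-1)(\hat g(n)-\hat f(n))$ handled by \lemref{lem2.1}(i), then sums using Parseval and Cauchy--Schwarz. The only differences are cosmetic (you work with $Rf-Rg$ rather than $Rg-Rf$ and introduce the shorthand $\phi,\theta$).
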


\begin{proof}
We have
\begin{align*}
(Rg-Rf)(t)&=\sum_{n\in\mathbb Z} e^{2\pi int} \cdot \frac{e^{2\pi i \, \hat g(n)t}-e^{2\pi i\hat f(n)t}-2\pi i \big(\hat g(n)-\hat f(n)\big)t}{2\pi it}\\
&=\sum_{n\in\mathbb Z}\Bigg[e^{2\pi i(n+\hat f(n))t}\cdot\frac{e^{2\pi i(\hat g(n)-\hat f(n))t}-1-2\pi i \big(\hat g(n)-\hat f(n)\big)t}{2\pi it}\\
&\quad +e^{2\pi int} \big(e^{2\pi i\hat f(n)t}-1\big)\big(\hat g(n)-\hat f(n)\big)\Bigg].
\end{align*}
Hence by \lemref{lem2.1},
\begin{align*}
&|(Rg-Rf)(t)| \le \sum_{n\in\mathbb Z}\left[\pi |t| \cdot \big(\hat g(n)-\hat f(n)\big)^2+2\pi |t| \cdot |\hat f(n)|\cdot|\hat g(n)-\hat f(n)|\right]\\
&\qquad \le \pi |t|  \sum_{n\in\mathbb Z}\big(\hat g(n)-\hat f(n)\big)^2+ 2\pi |t| \left(\sum_{n\in\mathbb Z}\hat f(n)^2\right)^{1/2} \left(\sum_{n\in\mathbb Z}\big(\hat g(n)-\hat f(n)\big)^2\right)^{1/2}.
\end{align*}
Since $|t| \leq 1/2$ on $I$ this implies the claim.
\end{proof}

\begin{remark*}
It follows from \lemref{lem2.2} that $R$ is a continuous mapping $X\to X.$
\end{remark*}

\begin{lemma}\label{lem2.3}
There exist absolute constants $c>0$ and $0<\rho<1$  such that
\begin{equation}\label{2.5}
\|Rf-Rg\|_\infty\le\rho \, \|f-g\|_\infty
\end{equation}
whenever $f,g\in X$ are such that $\|f\|_\infty\le c$, $\|g\|_\infty\le c$.
\end{lemma}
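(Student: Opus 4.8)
The plan is to deduce \eqref{2.5} directly from the Lipschitz-type estimate of \lemref{lem2.2}, upgrading the two quadratic terms on its right-hand side into a single term that is linear in $\|f-g\|_\infty$ by exploiting the smallness hypothesis $\|f\|_\infty\le c$, $\|g\|_\infty\le c$.

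First I would record the elementary consequence of the triangle inequality that, under these hypotheses,
$$\|f-g\|_\infty\le\|f\|_\infty+\|g\|_\infty\le 2c.$$
This is the only place where the bound on $\|g\|_\infty$ enters. Writing $d:=\|f-g\|_\infty$, the conclusion of \lemref{lem2.2} reads
$$\|Rf-Rg\|_\infty\le\tfrac{\pi}{2}\,d^2+\pi\|f\|_\infty\,d.$$

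Next I would bound each term linearly in $d$. For the first term I use $d\le 2c$ to get $d^2\le 2c\,d$, hence $\tfrac{\pi}{2}d^2\le\pi c\,d$. For the second term I simply use $\|f\|_\infty\le c$, giving $\pi\|f\|_\infty\,d\le\pi c\,d$. Adding these two estimates yields
$$\|Rf-Rg\|_\infty\le 2\pi c\,\|f-g\|_\infty.$$

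Finally I would choose $c$ small enough that the constant $2\pi c$ is strictly less than $1$; for instance $c=\frac{1}{4\pi}$ gives $\rho=\frac{1}{2}$. Since the resulting $c$ and $\rho$ depend on nothing but $\pi$, they are absolute constants, as required. I do not expect any genuine obstacle here: the whole content is the observation that on a ball of radius $c$ about the origin the quadratic nonlinearity of $R$ becomes subdominant to the linear behaviour, which is precisely the mechanism that will later let one apply the Banach contractive mapping theorem to $R$ (or to a suitable affine perturbation of it) and thereby produce the sequence $\{\alpha(n)\}$ of \thmref{thm2.1}.
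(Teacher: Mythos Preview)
Your proof is correct and follows essentially the same approach as the paper: the paper also deduces the bound $\|Rf-Rg\|_\infty\le 2\pi c\,\|f-g\|_\infty$ directly from \lemref{lem2.2} and then chooses $\rho:=2\pi c<1$. You have simply spelled out the intermediate steps that the paper leaves implicit.
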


\begin{proof} It follows from \lemref{lem2.2} that 
$$\|Rf-Rg\|_\infty\le 2\pi c \, \|f-g\|_\infty$$
whenever $\|f\|_\infty\le c,$ $\|g\|_\infty\le c$. Hence it is enough to choose $\rho := 2\pi c < 1$.
\end{proof}

\begin{lemma}\label{lem2.4}
 If $\eps>0$ is sufficiently small then for any $g\in X,$  $\|g\|_\infty\le  \eps,$  there exists
an element $f\in X$ such that $f+Rf=g$ and  $\|f\|_\infty \le 2\eps$.
\end{lemma}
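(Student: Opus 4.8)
The plan is to solve the equation $f+Rf=g$ by recasting it as a fixed-point problem and invoking the Banach contraction mapping theorem. To this end I would define the (affine shift of $-R$) map $T:X\to X$ by $Tf=g-Rf$; a fixed point of $T$ is precisely a solution of $f+Rf=g$. Since $X$ is a Banach space and $R$ is continuous, $T$ is continuous, and the task reduces to finding a complete invariant set on which $T$ contracts.

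First I would fix the absolute constant $c>0$ supplied by \lemref{lem2.3}, so that the contraction estimate \eqref{2.5} holds with some $\rho<1$ whenever both arguments have norm at most $c$. I would then restrict attention to the closed ball
$$
B=\{\,f\in X:\|f\|_\infty\le 2\eps\,\},
$$
which, being a closed subset of the Banach space $X$, is itself a complete metric space under the norm $\|\cdot\|_\infty$. The two points to verify are that $T$ maps $B$ into itself and that $T$ is a contraction on $B$.

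For the self-mapping property I would use the bound \eqref{2.4}: for any $f\in B$,
$$
\|Tf\|_\infty\le\|g\|_\infty+\|Rf\|_\infty\le\eps+\tfrac{\pi}{2}\|f\|_\infty^2\le\eps+2\pi\eps^2,
$$
and the right-hand side is at most $2\eps$ as soon as $\eps\le 1/(2\pi)$. For the contraction property I would impose the additional smallness requirement $2\eps\le c$, which guarantees that every $f,h\in B$ satisfy $\|f\|_\infty,\|h\|_\infty\le c$; then \lemref{lem2.3} gives
$$
\|Tf-Th\|_\infty=\|Rf-Rh\|_\infty\le\rho\,\|f-h\|_\infty,\qquad \rho<1.
$$

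Thus, choosing $\eps$ small enough that $\eps\le\min\{1/(2\pi),\,c/2\}$, the map $T$ is a contraction of the complete metric space $B$ into itself, and the Banach fixed-point theorem produces a (unique) element $f\in B$ with $Tf=f$, that is, $f+Rf=g$ and $\|f\|_\infty\le 2\eps$, as required. I do not expect a genuine obstacle here; the only point demanding care is the bookkeeping that coordinates the radius $2\eps$ of the ball with the two smallness thresholds, so that the invariance estimate and the contraction estimate both hold for the same choice of $\eps$.
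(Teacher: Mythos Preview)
Your proof is correct and follows essentially the same approach as the paper: rewrite $f+Rf=g$ as the fixed-point problem $f=g-Rf$ and apply the Banach contraction theorem using \eqref{2.4} for self-mapping and \lemref{lem2.3} for the contraction. The only cosmetic difference is that the paper works on the ball $\{f:\|f-g\|_\infty\le\eps\}$ centered at $g$, whereas you use the slightly larger ball $\{f:\|f\|_\infty\le 2\eps\}$ centered at the origin; both choices lead to the same smallness conditions $2\pi\eps<1$ and $2\eps\le c$.
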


 \begin{proof} Fix $g\in X$ such that $\|g\|_\infty \le \eps.$ Define a map $H: X\to X$ by
 $$H f :=g-Rf.$$
 Notice that $f\in X$ satisfies $f+Rf=g$ if and only if $f$ is a fixed point of the map $H.$
 
 Denote $B:=\{f\in X:\|f-g\|_\infty \le \eps\}.$ Let us show that $H(B)\subset B.$
 Indeed, if $f\in B$ then $\|f\|_\infty \le 2\eps,$ hence using \eqref{2.4} we have
 $$\|H(f)-g\|_\infty=\|Rf\|_\infty \le \frac{\pi}{2}  \cdot (2\eps)^2<\eps,$$
provided that $2\pi \eps < 1$. That is, $H(B)\subset B$.
 
It now follows  from \lemref{lem2.3} that if $2\eps < c$, then $H$ is a contractive mapping acting on the closed subset $B$ of $X.$ 
Hence by the Banach contractive mapping theorem,
there is a (unique) fixed point $f\in B$ of $H,$ which yields the required solution.
 \end{proof}
 
\subsection{}
Now we can finish the proof of \thmref{thm2.1}. 

\begin{proof}[Proof of \thmref{thm2.1}]
Choose a function $g\in X,$ $\|g\|_\infty<\varepsilon/2$, such that $g$ vanishes on $(-a,a)$ but  does not vanish identically on $I.$ Use \lemref{lem2.4} to find $f\in X$ such that
 $$f+Rf=g,\qquad \|f\|_\infty<\varepsilon.$$
 Set $$\alpha(n):=\hat f(n), \quad n\in\mathbb Z,$$
 and let $F$ be the function associated to this sequence $\{\alpha(n)\}$ defined by \eqref{2.1}. We have
 $$\hat F=\lim_{N\to\infty}\sum_{|n|\le N}\hat F_n$$
 in the sense of distributions, and
 $$\hat F_n(-t)=\frac{e^{2\pi i\alpha(n)t}-1}{2\pi it} \cdot e^{2\pi int} .$$
Hence
\begin{equation}\label{2.6}
\hat F(-t) =
\lim_{N\to\infty} \Big[ \sum_{|n|\le N} \hat f(n)e^{2\pi int} + \sum_{|n|\le N}
\frac{e^{2\pi i\hat f(n)t}-1-2\pi i\hat f(n)t}{2\pi it} \cdot e^{2\pi int} \Big].
\end{equation}

The first sum converges in $L^2(I)$ to the function $f$, while the second sum converges uniformly on $I$
to $Rf$. It follows that the distribution $\hat F$ satisfies
$$\hat F(-t)=f(t)+(Rf)(t)=g(t)$$
in the open interval $\left(-\frac{1}{2},\frac{1}{2}\right)$.
Hence, as $g$ vanishes in $(-a,a)$, the same is true for $\hat F.$

Finally,  notice that $f\ne0$ since $g\ne 0$, and thus
$$0<\sup_{n\in\mathbb Z}|\alpha(n)|\le \|f\|_\infty<\varepsilon.$$
Moreover, $\alpha(n)$ tends to zero as $n \to \pm \infty$ (in fact, $\sum \alpha(n)^2<\infty)$.
This completes the proof of \thmref{thm2.1}.
\end{proof}


\section{Construction of a non-periodic tiling}
\label{sec:nonp}

Here we use \thmref{2.1} to prove \thmref{thm1.1}.

\begin{proof}[Proof of \thmref{thm1.1}]

Let $\{\alpha(n)\}$ be the sequence given by \thmref{thm2.1}, and define
\begin{equation}\label{3.1}
\Lambda=\{n+\alpha(n)\},\quad n\in\mathbb Z.
\end{equation}
Observe that the distributional derivative of the function $F$ in \eqref{2.1} is
$$F'=\sum_{n\in\mathbb Z}(\delta_n-\delta_{n+\alpha(n)}) = \delta_{\mathbb Z} - \delta_\Lambda,$$
where we denote
\begin{equation*}
\delta_{\mathbb Z} =\sum_{n\in\mathbb{Z}}\delta_n, \qquad
\delta_\Lambda=\sum_{n\in\mathbb{Z}} \delta_{n+\alpha(n)}.
\end{equation*}
By Poisson's summation formula we have $\hat \delta_{\mathbb Z}=\delta_{\mathbb Z}.$
Hence, it follows that
$$\hat \delta_\Lambda=\delta_{\mathbb Z}-\ft{{F'}}.$$
Since $(-a,a)$ is a spectral gap for $F,$ the same is true for $F'.$ We deduce that
$$\hat \delta_\Lambda=\delta_0\quad\text{in $(-a,a)$.}$$

Let $f$ be a positive Schwartz function, with integral one, whose Fourier transform $\hat f$ has compact
support contained in $(-a,a)$. Denote $\varphi_x(t)=\hat f(t)\exp 2\pi ixt,$ then 
$$1=\langle\delta_0,\varphi_x\rangle=\langle\hat \delta_\Lambda,\varphi_x\rangle=\langle \delta_\Lambda,\hat\varphi_x\rangle=\sum_{\lambda\in\Lambda}f(x-\lambda).$$
Hence $f+\Lambda$ is a tiling at level 1.

Finally, observe that $\Lambda$ is not the finite union of  periodic sets. Indeed, since $\alpha(n)\to0$ as $n\to\pm\infty,$ any periodic set contained in $\Lambda$ must also be contained in $\mathbb Z.$ But $\Lambda$ itself is not contained in $\mathbb Z,$ since the $\alpha(n)$ are not all zero.
This completes the proof of \thmref{thm1.1}.
\end{proof}

\begin{remark*}
The fact that the measure $\delta_\Lambda$ defined above
admits a spectral gap was already pointed out by Kargaev, see \cite[Section 4.2]{Kar82}.
\end{remark*}


\section{Fourier analytic condition for tiling}\label{sec:fourier}

For a discrete set $\Lambda$ in $\mathbb{R}$ we define the measure
\begin{equation}\label{4.3}
\delta_\Lambda=\sum_{\lambda\in\Lambda}\delta_\lambda.
\end{equation}
If $\Lambda$ has bounded density (or, more generally, if  $\#(\Lambda\cap(-R,R))$ increases polynomially in $R$)
then the measure $\delta_\Lambda$ is a tempered distribution on $\mathbb R$.

\begin{theorem}\label{thm4.1}
Let $f\in L^1(\mathbb R)$ and $\Lambda$ be a discrete set of bounded density in $\mathbb R.$ If $f+\Lambda$ is a tiling
at some level $w$, then
\begin{equation}\label{4.1}
\supp(\hat\delta_\Lambda)\subset \{\hat f=0\}\cup\{0\}.
\end{equation}
\end{theorem}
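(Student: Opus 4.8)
The plan is to reduce everything to a single \emph{smooth} multiplier identity, moving the unavoidable division by $\hat f$ off the distribution $\hat\delta_\Lambda$ and onto the $L^1$ side. First I would record the distributional meaning of the tiling hypothesis. Since $f+\Lambda$ tiles at level $w$, the function $M(x)=\sum_{\lambda\in\Lambda}|f(x-\lambda)|$ is finite a.e., and using the bounded density of $\Lambda$ one checks that $M$ is uniformly locally integrable (its integral over any unit interval is at most $C\|f\|_1$). This is what makes all the Fubini interchanges below legitimate, and in particular it lets one read \eqref{1.1} as the distributional identity $f*\delta_\Lambda=w$.

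To prove \eqref{4.1} it suffices to fix $t_0\neq0$ with $\hat f(t_0)\neq0$ and show that $\hat\delta_\Lambda$ vanishes near $t_0$. Choose a bounded open interval $J\ni t_0$ with $0\notin\overline J$ and $\hat f\neq0$ on $\overline J$, and pick $\psi\in C_c^\infty(J)$ with $\psi\equiv1$ on a neighborhood of $t_0$. The heart of the argument is to manufacture an honest $L^1$ function whose Fourier transform is exactly $\psi$ and which still tiles. Since $f\in L^1$, its transform $\hat f$ lies in the Wiener algebra $A(\R)=\mathcal F L^1$; because $\hat f$ does not vanish on the compact set $\supp\psi$, Wiener's theorem (division in $A(\R)$) yields $k\in L^1(\R)$ with $\hat k=\psi/\hat f$, supported in $\supp\psi$. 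Set $h:=f*k\in L^1(\R)$, so that $\hat h=\hat f\,\hat k=\psi$.

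Next I would compute $h*\delta_\Lambda$ in two ways. By associativity of the convolution — justified because $|k|*M$ is again uniformly locally integrable, hence finite a.e. — one obtains $\sum_{\lambda\in\Lambda} h(x-\lambda)=k*(f*\delta_\Lambda)=k*w=w\int k=w\,\hat k(0)$ for a.e.\ $x$, and $\hat k(0)=0$ because $0\notin\supp\psi$. Thus $h*\delta_\Lambda=0$. Taking Fourier transforms and using that $\hat h=\psi$ is \emph{smooth} (so that the product $\psi\,\hat\delta_\Lambda$ is a genuine tempered distribution), the convolution theorem gives $\psi\,\hat\delta_\Lambda=\widehat{h*\delta_\Lambda}=0$. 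As $\psi\equiv1$ near $t_0$, this forces $\hat\delta_\Lambda=0$ on that neighborhood, so $t_0\notin\supp\hat\delta_\Lambda$. Letting $t_0$ range over $\{\hat f\neq0\}\setminus\{0\}$ yields \eqref{4.1}.

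The main obstacle is precisely the low regularity of $\hat f$: it is only continuous, so one cannot directly divide $\hat\delta_\Lambda$ by $\hat f$, nor form the product $\hat f\cdot\hat\delta_\Lambda$, when $\hat\delta_\Lambda$ is a distribution of positive order — which is exactly why earlier work imposed extra smoothness on $f$. The device that circumvents this is to perform the division on the $L^1$ side via Wiener's Tauberian theorem, producing $h=f*k$ with the prescribed smooth transform $\psi$; the delicate product $\hat f\cdot\hat\delta_\Lambda$ is thereby traded for the harmless smooth product $\psi\cdot\hat\delta_\Lambda$. The only remaining points needing care are the Fubini interchanges and the identity $\widehat{h*\delta_\Lambda}=\hat h\,\hat\delta_\Lambda$, both of which follow from the uniform local integrability established in the first step.
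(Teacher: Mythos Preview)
Your proposal is correct and follows essentially the same route as the paper: both localize near $t_0\neq0$ with $\hat f(t_0)\neq0$, invoke Wiener's Tauberian theorem to divide by $\hat f$ on the $L^1$ side, and then use a Fubini/associativity argument to exploit the tiling identity. Your $k$ with $\hat k=\psi/\hat f$ is exactly the paper's $\varphi*g$ (where $\hat f\,\hat g=1$ on $\supp\psi$ and $\hat\varphi=\psi$), and your $h=f*k$ is the paper's $\varphi$; the only cosmetic difference is that you phrase the conclusion as the distributional identity $\psi\cdot\hat\delta_\Lambda=0$, whereas the paper pairs against each test function and shows $\langle\hat\delta_\Lambda,\psi\rangle=0$ directly.
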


This result was proved in \cite{KoLa96} under the extra assumption that the Fourier transform $\hat f$ is a smooth function.
In \cite{Ko00a,Ko00b} another version of the result was proved, where instead of smoothness it was assumed that
$\hat f$ is non-negative and has compact support. Our goal in \thmref{thm4.1} is to remove these additional assumptions.

\begin{proof}
Let $t_0$ be a point such that $\hat f(t_0)\ne0,$ and $t_0\ne0.$ Then $\hat f$ has no zeros 
in some open interval $J$ containing $t_0,$ but such that $J$ does not contain $0.$ 
We will show that $\hat\delta_\Lambda$ vanishes on $J,$ which implies that $t_0$ lies
 outside of the support of $\hat\delta_\Lambda.$

Let therefore $\psi$ be an infinitely smooth function, whose support lies in $J.$ We must show that $\langle \hat \delta_\Lambda,\psi\rangle=0.$
Let $[a,b]\subset J$ be a closed interval which contains $\supp(\psi).$ Then there is $g\in L^1(\mathbb R)$ such that $\hat f\cdot\hat g=1$ on $[a,b]$ (this is essentially Wiener's Tauberian theorem, see e.g.\ \cite[Section~6.2]{Hel10}).

The function $\psi$ is the Fourier transform of some function $\varphi$ in the Schwartz class. We have
$$\hat\varphi\cdot \hat g\cdot\hat f=\hat\varphi,$$
and hence
\begin{equation}\label{4.4}
\varphi\ast g\ast f=\varphi.
\end{equation}
It follows that
\begin{align}
\langle\hat\delta_\Lambda,\psi\rangle&=\langle\delta_\Lambda,\hat\psi\rangle=\sum_{\lambda\in
\Lambda}\varphi(-\lambda)=\sum_{\lambda\in\Lambda}(\varphi\ast g\ast f)(-\lambda)\nonumber\\
&=\sum_{\lambda\in\Lambda}\int_{\mathbb R}(\varphi\ast g)(-x)f(x-\lambda)dx.\label{4.5}
\end{align}

Now we need the following

\begin{claim*}
We have
\begin{equation}\label{4.6}
\sum_{\lambda\in\Lambda}\int_{\mathbb R}|(\varphi\ast g)(-x)|\cdot|f(x-\lambda)|dx<\infty.
\end{equation}
\end{claim*}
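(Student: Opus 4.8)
The plan is to reduce the claim to two independent estimates: a uniform bound on the local $L^1$-mass of the ``tiling sum'' of $|f|$, coming from the bounded density of $\Lambda$, and a summable-decay estimate on the convolution $\varphi\ast g$, coming from the rapid decay of the Schwartz function $\varphi$. Since the integrand in \eqref{4.6} is nonnegative, Tonelli's theorem lets me interchange summation and integration, so the left-hand side of \eqref{4.6} equals
\[
\int_{\mathbb R} |h(-x)| \, G(x) \, dx, \qquad h:=\varphi\ast g, \quad G(x):=\sum_{\lambda\in\Lambda}|f(x-\lambda)|.
\]
Here $h$ is a bounded continuous function (convolution of the bounded $\varphi$ with $g\in L^1$), so the pointwise suprema below make sense. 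It therefore suffices to prove that this integral is finite.

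The first ingredient is that $G$ has uniformly bounded mass on unit intervals. For each integer $k$ I would compute, again by Tonelli and the change of variable $y=x-\lambda$,
\[
\int_k^{k+1} G(x)\, dx = \sum_{\lambda\in\Lambda} \int_{k-\lambda}^{k-\lambda+1} |f(y)|\, dy = \int_{\mathbb R} |f(y)| \cdot \#\{\lambda\in\Lambda : \lambda\in[k-y,\, k-y+1]\}\, dy.
\]
Because $\Lambda$ has bounded density, the number of its points in any closed unit interval is bounded by a constant $C<\infty$ depending only on $\Lambda$. Hence the counting factor is $\le C$ uniformly, and $\int_k^{k+1} G \le C\|f\|_1$ for every $k$.

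The second ingredient is that $h$ lies in the amalgam space $W(L^\infty,\ell^1)$, that is, $\sum_{k\in\mathbb Z}\sup_{x\in[k,k+1]}|h(x)|<\infty$. To see this I set $\Phi(m):=\sup_{|u-m|\le 1}|\varphi(u)|$; since $\varphi$ is Schwartz, $\sum_{m}\Phi(m)<\infty$. For $x\in[k,k+1]$ and $y\in[j,j+1]$ one has $x-y\in[k-j-1,\,k-j+1]$, so $\sup_{x\in[k,k+1]}|\varphi(x-y)|\le\Phi(k-j)$. Inserting this into $h(x)=\int\varphi(x-y)g(y)\,dy$ and summing over $k$ gives, by Fubini, $\sum_k\sup_{[k,k+1]}|h|\le\big(\sum_m\Phi(m)\big)\|g\|_1<\infty$.

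Finally I would combine the two ingredients. Splitting $\int_{\mathbb R}|h(-x)|G(x)\,dx$ over unit intervals and bounding each piece by $\big(\sup_{x\in[k,k+1]}|h(-x)|\big)\int_k^{k+1}G$, the total is at most $C\|f\|_1\sum_{k}\sup_{x\in[k,k+1]}|h(-x)|$, which is finite by the amalgam estimate (reindexing the suprema over all unit intervals). This proves the claim. I expect the main obstacle to be the decay estimate on $h$: since $g$ is merely integrable and carries no decay, $h=\varphi\ast g$ need not be Schwartz or even rapidly decreasing, so pointwise decay is unavailable; the correct substitute is membership in $W(L^\infty,\ell^1)$, which is precisely what pairs with the uniform local $L^1$-bound on $G$ obtained from the bounded density of $\Lambda$.
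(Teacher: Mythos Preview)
Your argument is correct, but it is genuinely different from the paper's. The paper first bounds $|(\varphi\ast g)(-x)|\le(|\varphi|\ast|g|)(-x)$ and then, since everything is nonnegative, rearranges the resulting triple sum/integral in one line as
\[
\sum_{\lambda\in\Lambda}\int_{\mathbb R}(|\varphi|\ast|g|)(-x)\,|f(x-\lambda)|\,dx
=\int_{\mathbb R}(|f|\ast|g|)(-x)\sum_{\lambda\in\Lambda}|\varphi(x-\lambda)|\,dx,
\]
so that the Schwartz function $\varphi$ is the one summed over $\Lambda$ (giving a bounded function by bounded density) while the two $L^1$ functions $f,g$ sit together in a single $L^1$ convolution. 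This avoids any decay analysis of $h=\varphi\ast g$. Your route instead keeps $h$ intact and proves the amalgam estimate $h\in W(L^\infty,\ell^1)$, pairing it with the dual estimate $G\in W(L^1,\ell^\infty)$. Both are valid; the paper's rearrangement is shorter and more elementary, while your approach makes explicit the function-space duality that underlies the estimate and explains precisely what the rapid decay of $\varphi$ buys when convolved with a merely integrable $g$.
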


The claim allows us to exchange the sum and integral in \eqref{4.5}. Assuming that $f+\Lambda$ is a tiling at level $w,$ we get
$$\langle\hat\delta_\Lambda,\psi\rangle=\int_{\mathbb R}(\varphi\ast g)(-x)\sum_{\lambda\in\Lambda}f(x-\lambda)dx=w\int_{\mathbb R}(\varphi\ast g)(-x)dx=w\cdot\hat\varphi(0)\cdot\hat g(0).$$
But since $\hat\varphi=\psi$ and $0\notin \supp(\psi)$, it follows that $\langle\hat\delta_\Lambda,\psi\rangle=0$, as needed.

It remains to prove the claim. Indeed, the left hand side of \eqref{4.6} is not greater than
\begin{equation}\label{4.7}
\sum_{\lambda\in\Lambda}\int_{\mathbb R}(|\varphi|\ast|g|)(-x)\cdot|f(x-\lambda)|dx=\int_{\mathbb R}(|f|\ast|g|)(-x)\sum_{\lambda\in\Lambda}|\varphi(x-\lambda)|dx.
\end{equation}
The inner sum on the right hand side of \eqref{4.7} is a bounded function of $x,$ since $\varphi$ is a Schwartz function and $\Lambda$ has bounded density, while $|f|\ast|g|$ is a function in $L^1(\mathbb R).$ Hence the integral in \eqref{4.7} converges, and this completes the proof.
\end{proof}

\begin{remark*}
The proof above also shows (by choosing $t_0=0$) that if $\int f\ne 0$, then there is $a>0$ such that
$$\hat\delta_\Lambda=c\cdot\delta_0\quad\text{in $(-a,a)$,}$$
where $c=w\cdot(\int f)^{-1}.$
\end{remark*}


\section{Tilings of finite local complexity are periodic}
\label{sec:flc}

\subsection{}
The following result was proved in \cite{IoKo13}, although it was not explicitly  stated there  in this form.

 \begin{theorem} \label{thm5.1}
Let $\Lambda$ be a set of finite local complexity in $\mathbb{R}$. If the distribution $\hat \delta_\Lambda$ vanishes 
on some open interval $(a,b)$ then $\Lambda$ must be a periodic set.
 \end{theorem}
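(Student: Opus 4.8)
The plan is to understand the structure imposed on $\hat\delta_\Lambda$ by the finite local complexity (FLC) hypothesis together with the spectral gap on $(a,b)$, and to extract periodicity from it. First I would exploit FLC directly. Writing $\Lambda=\{\lambda(n)\}$ with gaps $g(n)=\lambda(n+1)-\lambda(n)$ taking finitely many values $\ell_1,\dots,\ell_k$, the sequence of gaps is a two-sided sequence over a finite alphabet; periodicity of $\Lambda$ is equivalent to periodicity of this gap sequence. So the real target is to prove that the gap sequence is eventually periodic (in fact doubly periodic). The distributional information $\hat\delta_\Lambda=0$ on $(a,b)$ should be the engine that forces this combinatorial rigidity.

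Next I would try to convert the spectral gap into an almost-periodicity or self-similarity statement about $\delta_\Lambda$. A clean way is to observe that, because $\hat\delta_\Lambda$ vanishes on the open interval $(a,b)$, convolving $\delta_\Lambda$ against any Schwartz function $\psi$ whose Fourier transform is supported in $(a,b)$ annihilates $\delta_\Lambda$. More usefully, one can build a nonzero trigonometric-type multiplier or a suitable mollifier supported spectrally off $(a,b)$ to show that the ``local patterns'' of $\Lambda$ cannot vary freely. The key mechanism I expect is this: FLC means there are only finitely many possible finite patterns of $\Lambda$ of any fixed diameter $R$; the spectral gap is a strong harmonic constraint that should force a recurring pattern to propagate. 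Concretely, I would look at the measure-theoretic autocorrelation or the values $\delta_\Lambda$ assigns against translated bumps, and argue that two positions in $\Lambda$ surrounded by identical patterns out to a large radius must continue identically, i.e. the pattern determines its continuation. This turns the problem into a one-dimensional symbolic dynamics / substitution question.

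The cleanest route to finish, once local patterns are shown to determine their neighbours, is an almost-periodicity argument: the spectral gap condition implies $\delta_\Lambda$ is a (tempered) measure whose Fourier transform is a measure or distribution supported away from a neighbourhood of some frequencies, and combined with the discreteness and uniform bounded gaps from FLC one should deduce that $\delta_\Lambda$ is an almost periodic measure. For a uniformly discrete set of finite local complexity, mean almost periodicity (or Bohr almost periodicity of the associated measure) upgrades to genuine periodicity, because an almost periodic $\{0,1\}$-type pattern over a finite gap alphabet with uniformly discrete support must be periodic. I would invoke the Fourier-analytic rigidity to land in the almost-periodic regime and then use the finite alphabet to collapse almost-periodicity to periodicity.

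The hard part will be the middle step: rigorously turning the spectral gap $\hat\delta_\Lambda=0$ on $(a,b)$ into the pattern-propagation statement. The vanishing is only on a bounded open interval, not on a half-line or a neighbourhood of infinity, so it is a weak constraint that does not by itself give a convolution identity expressing $\delta_\Lambda$ in terms of a compactly supported kernel. Overcoming this will require constructing an auxiliary function (analogous to the idempotent or Wiener-type kernel used in \thmref{thm4.1}) that is supported inside $(a,b)$ in frequency yet whose spatial side has enough decay and structure to detect local patterns. I anticipate that the argument of \cite{IoKo13} handles exactly this passage, and that the main technical labour lies in making the FLC quantitative enough (finitely many patterns of each radius) to close the loop from harmonic vanishing to combinatorial periodicity.
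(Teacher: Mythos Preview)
The paper does not actually prove this theorem; it quotes it from \cite{IoKo13} and points to Sections~2.3--2.4 there for the argument. So there is no in-paper proof to compare against --- both you and the paper ultimately defer to the same external reference, and in that narrow sense your proposal ends in the same place the paper does.

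That said, the mechanism you sketch has a genuine gap, and it is not the mechanism of \cite{IoKo13}. Your proposed bridge ``spectral gap on a bounded interval $\Rightarrow$ $\delta_\Lambda$ is an almost periodic measure $\Rightarrow$ periodicity via the finite gap alphabet'' does not work as stated: vanishing of $\hat\delta_\Lambda$ on a single interval $(a,b)$ is far weaker than the spectral discreteness that underlies Bohr or mean almost periodicity, and convolving with a $\psi$ whose Fourier transform is supported in $(a,b)$ yields only the single identity $\delta_\Lambda \ast \psi \equiv 0$, not a rule that lets a local pattern determine its continuation. You correctly flag this as the ``hard part,'' but the almost-periodicity detour you reach for is not how it is resolved. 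The argument in \cite{IoKo13} is more direct and combinatorial: one uses finite local complexity to sort the points of $\Lambda$ into finitely many classes according to their local pattern, rewrites the condition $\hat\delta_\Lambda = 0$ on $(a,b)$ as the vanishing on an interval of a finite exponential sum whose frequencies come from the gap alphabet, and then uses the linear independence of exponentials on an interval to force relations among the classes that amount to a shift-invariance of the pattern sequence. Your instinct to reduce to the finite gap alphabet is exactly right; what is missing is that the analytic input is an identity-theorem argument for exponential sums, not almost periodicity.
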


The result stated explicitly in \cite{IoKo13} was that if  $\Omega$ is a bounded set in $\R$, $\mes(\Omega)=1$, and if  the function $f = |\ft{\1}_\Omega|^2$ tiles $\R$ at level $1$ with some translation set $\Lambda$, then $\Lambda$ is necessarily periodic.  However, the periodicity of $\Lam$ was deduced using only the fact that such $\Lam$ must have finite local complexity, and $\hat \delta_\Lambda$ must vanish on some open interval, see  \cite[Sections 2.3--2.4]{IoKo13}.

Using Theorems \ref{thm4.1}  and \ref{thm5.1}  we can now prove \thmref{thm1.2}.

\begin{proof}[Proof of \thmref{thm1.2}]

Assume that $f+\Lambda$ is a tiling at some level $w$, where $f\in L^1(\mathbb R)$ and $\Lambda$ is a set of finite local complexity. In particular, $\Lambda$ has bounded density. Hence by \thmref{thm4.1} we have
\begin{equation}\label{5.1}
\supp(\hat\delta_\Lambda)\subset\{\hat f=0\}\cup\{0\}.
\end{equation}
Since $f$ is not identically zero, the closed set on the right hand side of \eqref{5.1} is not the whole $\mathbb R. $ So there is an open interval $(a,b)$ disjoint from $\supp(\hat \delta_\Lambda),$ that is, the measure $\delta_\Lambda$ has a spectral gap.
Thus, by \thmref{thm5.1}, $\Lambda$ must be a periodic set. 
\end{proof}

\subsection{}
It remains to prove \thmref{thm6.1}. As we have mentioned, it can be deduced from \thmref{thm1.2}, but we will provide an alternative, independent proof. The point is that when tilings of $\Z$ are considered, then instead of using Theorems \ref{thm4.1}  and \ref{thm5.1}, the proof can be based on some classical results in Fourier analysis.

Observe that if  $f\in\ell^1(\mathbb Z)$ and $\Lambda \subset \mathbb Z$, then the condition that $f+\Lambda$ is a tiling of $\Z$ at level $w$ means that
\begin{equation}\label{5.2a}
(f \ast \1_\Lambda)(n) = w, \quad n \in \mathbb Z,
\end{equation}
namely, the convolution of $f$ with the indicator function $\1_\Lam$ is the constant function which is equal to $w$ on $\Z$. This  can be reformulated by saying that
\begin{equation}\label{5.2b}
\ft{f} \cdot \ft{\1}_\Lam = w \cdot \delta_0,
\end{equation}
where $\ft f$ is understood as an element of the Wiener algebra $A(\T)$  of  continuous functions on the circle $\mathbb T = \mathbb R / \mathbb Z$
with absolutely convergent Fourier series, and $\ft{\1}_\Lam$ is understood as a ``pseudo-measure'' on $\T$, that is, as a distribution with bounded Fourier coefficients.

The following result is basically due to Wiener, but its formulation in terms of  pseudo-measures is due to Kahane and Salem, see \cite[p. 170, Proposition 4]{KaSa94}.

 \begin{theorem} \label{thm5.2}
Let $\varphi \in A(\T)$ and $S$ be a pseudo-measure on $\T$. If $\varphi \cdot S = 0$ then $\varphi$ vanishes on the support of $S$.
 \end{theorem}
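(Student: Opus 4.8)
The plan is to argue by contraposition: suppose $\varphi$ does not vanish on $\supp S$, so that $\varphi(x_0)\ne 0$ for some $x_0\in\supp S$, and deduce that $S$ actually vanishes in a neighborhood of $x_0$, contradicting $x_0\in\supp S$. Recall that the pseudo-measures are precisely the elements of the dual space $A(\T)^*$, and that $A(\T)$ acts on them by $\langle u S, v\rangle = \langle S, uv\rangle$ for $u,v\in A(\T)$; since $A(\T)$ is a Banach algebra under pointwise multiplication, this action is well defined and associative, so that $\psi(\varphi S)=(\psi\varphi)S$ for $\psi,\varphi\in A(\T)$. The only substantial input I will use is \emph{Wiener's inversion theorem}: a function in $A(\T)$ that vanishes nowhere on $\T$ has its pointwise reciprocal again in $A(\T)$.

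The first step is to produce a local inverse for $\varphi$ near $x_0$, namely $\psi\in A(\T)$ and an open arc $V\ni x_0$ with $\psi\varphi=1$ on $V$. After multiplying $\varphi$ by a constant I may assume $\varphi(x_0)=1$. Since $\varphi$ is continuous, there is an arc $J\ni x_0$ on which $|\varphi-1|<\tfrac12$. Choose a bump $h\in A(\T)$ with $0\le h\le 1$, with $h\equiv 1$ on a smaller arc $V\ni x_0$, and with $\supp h\subset J$ (for instance a suitable trigonometric polynomial or smooth function). Set $g:=1-h(1-\varphi)=1-h+h\varphi\in A(\T)$. Then $1-g=h(1-\varphi)$ vanishes off $\supp h$ and satisfies $|1-g(x)|\le|1-\varphi(x)|<\tfrac12$ on $\supp h$, so $|g(x)|>\tfrac12$ for every $x\in\T$; in particular $g$ vanishes nowhere. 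By Wiener's theorem $\psi:=1/g\in A(\T)$, and on $V$ we have $h\equiv1$, hence $g=\varphi$ there and $\psi\varphi=\psi g=1$ on $V$.

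With $\psi$ in hand the conclusion is immediate. From the hypothesis $\varphi S=0$ and associativity of the action, $(\psi\varphi)S=\psi(\varphi S)=0$. On the other hand $\psi\varphi-1\in A(\T)$ vanishes identically on $V$, so for every $v\in A(\T)$ supported in $V$ we get $\langle(\psi\varphi-1)S,v\rangle=\langle S,(\psi\varphi-1)v\rangle=0$, because $(\psi\varphi-1)v\equiv0$. Thus $(\psi\varphi)S$ and $S$ agree on $V$, and since $(\psi\varphi)S=0$ we conclude that $S$ vanishes on $V$, contradicting $x_0\in\supp S$. This completes the argument.

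The routine bookkeeping, namely checking that the action of $A(\T)$ on pseudo-measures is associative and that multiplication by a locally vanishing symbol kills a pseudo-measure locally, is straightforward from the duality. The one genuinely non-elementary ingredient, and the step I expect to be the crux, is Wiener's inversion theorem, which is exactly what converts the pointwise condition $\varphi(x_0)\ne0$ into the algebraic statement $\psi\varphi=1$ near $x_0$ inside the Wiener algebra; the localizing bump construction above is the standard device for passing from the global $1/f$ theorem to the local invertibility that the support argument requires.
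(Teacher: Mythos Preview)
The paper does not actually prove this theorem; it is quoted as a classical result (``basically due to Wiener'', in the pseudo-measure formulation of Kahane--Salem) and cited to the literature without argument. Your proof is correct and is precisely the standard one: localize near a point where $\varphi$ does not vanish, use Wiener's $1/f$ theorem to manufacture $\psi\in A(\T)$ with $\psi\varphi=1$ on a neighborhood $V$, and then use associativity of the $A(\T)$-action on $A(\T)^*$ to conclude that $S$ vanishes on $V$.

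One small slip in the parenthetical: a nonzero trigonometric polynomial cannot be supported in a proper arc of $\T$, so ``a suitable trigonometric polynomial'' will not serve as the bump $h$; the smooth bump you also mention does the job, since $C^\infty(\T)\subset A(\T)$.
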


We will also use the following result due to Helson, see \cite[pp. 199-200]{Hel10}.

 \begin{theorem} \label{thm5.3}
Let $g$ be a function on $\Z$ which attains only finitely many different values. If the distribution $\ft{g}$ vanishes on some open interval $(a,b)$ on the circle $\T$, then $g$ is a periodic function on $\Z$.
 \end{theorem}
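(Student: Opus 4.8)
The plan is to prove \thmref{thm5.3} through the Cauchy (generating-function) transform of $g$, using the spectral gap to continue it analytically across an arc and then invoking a classical theorem of Szeg\H{o} on power series whose coefficients take only finitely many values. Since $g$ is finite-valued it is bounded, so the two one-sided transforms
\[
G_+(z) = \sum_{n \ge 0} g(n)\, z^{n}, \qquad G_-(z) = \sum_{n \ge 1} g(-n)\, z^{-n}
\]
are analytic in $|z| < 1$ and in $|z| > 1$ respectively, with $G_-(\infty)=0$, and each has boundary values on the unit circle in the sense of distributions. Writing $z=e^{-2\pi i t}$ and identifying $|z|=1$ with $\T$, the pseudo-measure $\ft g$ is exactly $G_+ + G_-$ read as the sum of the boundary value of $G_+$ from inside and of $G_-$ from outside. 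Thus the hypothesis that $\ft g$ vanishes on the arc $A\subset\T$ corresponding to $(a,b)$ says precisely that these two boundary values are negatives of one another on $A$.

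First I would make the gluing rigorous. If $G_+$ (inside) and $-G_-$ (outside) have equal distributional boundary values on the open arc $A$, then by the Painlev\'e--Schwarz reflection principle they are restrictions of a single function analytic in a full neighbourhood of $A$; in particular $G_+$, and likewise $G_-$, continues analytically across the arc $A$ of the unit circle. This step is the main analytic obstacle: one must justify the reflection principle for the \emph{distributional} boundary data produced by a merely bounded sequence $g$, rather than for continuous boundary values.

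Next I would invoke Szeg\H{o}'s theorem: a power series whose coefficients take only finitely many distinct values either has the unit circle as a natural boundary, or represents a rational function all of whose poles are simple and located at roots of unity. The coefficients of $G_+$ are the values $g(n)$, $n\ge 0$, which lie in the finite value set of $g$; since $G_+$ continues analytically across $A$, the natural-boundary alternative is excluded, and so $G_+$ is rational with poles only at roots of unity. The same reasoning applies to $G_-$.

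Finally I would combine the two sides. The rational functions $G_+$ and $-G_-$ agree on the arc $A$, hence coincide as rational functions; call this common function $\Phi$. Then $\Phi$ is rational, all its poles lie at roots of unity on the unit circle, and $\Phi(\infty)=-G_-(\infty)=0$, so $\Phi$ is proper. A partial-fraction expansion yields a single closed form $g(n)=\sum_j c_j\,\eta_j^{-n}$ with each $\eta_j$ a root of unity, valid simultaneously from the expansion of $\Phi$ at $0$ (giving $g(n)$, $n\ge 0$) and at $\infty$ (giving $g(n)$, $n<0$); a finite sum of characters at roots of unity is a periodic sequence on all of $\Z$, which is the desired conclusion. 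Observe that the spectral gap enters twice: once to furnish the analytic continuation feeding Szeg\H{o}'s theorem, and once, via the two-sided matching that forces $G_+=-G_-$ and hence removes any transient, to upgrade Szeg\H{o}'s merely eventual periodicity to genuine periodicity of $g$ on all of $\Z$.
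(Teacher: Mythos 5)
Your proof is correct, and the gluing step you rightly flag as the main obstacle is a standard fact about pseudo-measures — since $g$ is bounded, $\ft g$ is a pseudo-measure, and the analytic (Cauchy) representation of a pseudo-measure continues holomorphically across any arc disjoint from its support, which is available in the paper's own reference \cite{KaSa94}; the rest (Szeg\H{o}'s dichotomy for the finitely-valued coefficients of $G_+$ and $G_-$, the identity $G_+=-G_-$ forced on a neighbourhood of the arc, properness at $\infty$ killing the polynomial transient, and the partial-fraction formula $g(n)=\sum_j c_j\eta_j^{-n}$ valid on all of $\Z$ after the sign bookkeeping at $\infty$) goes through exactly as you describe. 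Note that the paper itself gives no proof of \thmref{thm5.3} but cites Helson \cite[pp. 199--200]{Hel10}, whose argument is precisely this Szeg\H{o}-theorem route, so your proposal essentially reconstructs the cited proof.
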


 Theorems \ref{thm5.2} and \ref{thm5.3} can be used  in the role played above by Theorems \ref{thm4.1}  and \ref{thm5.1}, to prove that any tiling of $\Z$ must be periodic.

\begin{proof}[Proof of \thmref{thm6.1}]

Assume that $f+\Lambda$ is a tiling of $\Z$ at some level $w$, where $f\in\ell^1(\mathbb Z)$
and $\Lambda \subset \mathbb Z$. Hence the condition \eqref{5.2b} is satisfied.
Let $\varphi$ be a smooth function on $\mathbb T$ such that $\varphi(0) = 0$. From \eqref{5.2b} we get
$$
\varphi \cdot \ft f \cdot \ft \1_\Lambda = 0.
$$
Since $\varphi \cdot \ft f \in A(\T)$, it follows from \thmref{thm5.2} that $\supp (\ft\1_\Lambda) \subset \{\varphi \cdot \ft f = 0\}.$ As this holds for any
smooth $\varphi$ vanishing at $0$, this shows that
\begin{equation}\label{5.4}
\supp (\ft\1_\Lambda) \subset \{\ft f = 0\} \cup \{0\}.
\end{equation}

Since $f$ is not identically zero,  this implies (as before) that there is an open interval $(a,b)$ disjoint from $\supp(\hat \1_\Lambda)$. Hence $\1_\Lambda$ is a function on $\Z$ which attains only finitely many different values (namely, the values $0$ and $1$), and which has a spectral gap. By \thmref{thm5.3} the function $\1_\Lam$ thus must be periodic, hence $\Lambda$ is a periodic set.
\end{proof}


\section{Open problems}
\label{sec:open}

\subsection{} 

In the proof of \thmref{thm1.1} we constructed a positive function $f\in L^1(\mathbb R)$ and a discrete set 
$\Lambda$ of bounded density, such that $f+\Lambda$ is a tiling at level $1$, but $\Lambda$ is not a finite union of periodic sets.
The fact that $f$ tiles with the translation set $\Lambda$ was deduced from the property that
$\hat \delta_\Lambda=\delta_0$ in some interval $(-a,a)$, and $\hat f$ is supported in $(-a,a).$

Notice that the function $f$ in this example also admits a periodic tiling, namely, it also tiles with 
the translation set $\mathbb Z$ (at the same level). This follows from the same considerations, 
as we also have $\hat \delta_{\mathbb{Z}}=\delta_0$ in the same interval $(-a,a)$.

It seems an interesting question whether this phenomenon holds in general. That is, if a function $f$ tiles $\R$ with a given translation set $\Lambda,$ does it necessarily tile also with some other translation set $\Lambda'$ which is a finite union of periodic sets (at the same level, or at some other level)?
This may be seen as a version of the ``periodic tiling conjecture'' \cite{grunbaum1986tilings,lagarias1997spectral} for functions.

\subsection{} 
Another interesting question is whether \thmref{thm1.1} may be strengthened by taking $f$ to be an indicator function.
Namely, does there exist a set $\Omega\subset\mathbb R$ of positive and finite Lebesgue measure, which tiles by a non-periodic translation set $\Lambda$ (at level 1)? Remark that such $\Omega$ must be an unbounded set, see \cite[Theorem 6.1]{KoLa96}.


\end{document}